\theoremstyle{plain}
\numberwithin{equation}{section}
\def\la{\lambda}
\def\pa{\partial}
\def\f{\frac}
\def\na{\nabla}
\def\t{\tilde}
\def\g{\gamma}
\newtheorem{thm}{Theorem}[section]
\newtheorem{lem}{Lemma}[section]
\theoremstyle{remark}
\newtheorem{rem}{Remark}
\numberwithin{equation}{section}
\newcommand \RR   {\mathbb{R}}
\def\be{\begin{equation}}
\def\ee{\end{equation}}
\def\la{\label}
\def\pa{\partial}
\def\f{\frac}
\def\na{\nabla}
\def\t{\tilde}
\def\g{\gamma}
\def\r{\rho}
\def\f{\frac}
\def\t{\tau}
\def\be{\begin{equation}}
\def\ee{\end{equation}}
\def\la{\label}
\begin{document}
\title{Existence of Magnetic  Compressible Fluid Stars}
\author{Paul Federbush, Tao Luo, Joel $\text {Smoller} ^1$}

\date{}
\maketitle

\footnotetext[1]{Smoller was supported by the NSF, Grant no, DMS-1105189.}
\begin{abstract}
The existence of magnetic star solutions which are axi-symmetric stationary  solutions  for the  Euler-Poisson system of compressible fluids coupled to a magnetic field is proved in this paper by a variational method. Our method of proof consists of deriving an elliptic equation for the magnetic potential  in cylindrical coordinates in $\mathbb{R}^3$, and obtaining the estimates of the Green's function for this elliptic equation by  transforming it to 5-Laplacian. \end{abstract}

\section{Introduction}
The purpose of this paper is to prove the existence of  magnetic star solutions
which are axi-symmetric stationary  solutions  for the following  Euler-Poisson system of compressible fluids coupled to a magnetic field 
 (cf. \cite{11, 57, 68}):
\begin{equation}\label{MHD}
\begin{cases}
&\rho_t+\nabla\cdot(\rho {\bf v})=0,\\
&(\rho {{\bf v}})_t+\nabla\cdot(\rho {\bf v}\otimes {\bf v})+\nabla p(\rho)=-\rho \nabla \Psi+\frac{1}{4\pi}(\nabla\times {\bf B})\times {\bf B},\\
&\f{\pa {\bf B}}{\pa t}=\na\times ({\bf v}\times {\bf B}),\\
& \nabla\cdot {\bf B}=0, \\
&\Delta \Psi=4\pi G\rho.
\end{cases}
\end{equation}
Here $\rho$, ${\bf v}=(v_1, v_2, v_3)$, ${\bf B}$,  $p(\rho)$ and $\Phi$ denote
the density, velocity, magnetic field, pressure and gravitational
potential, respectively.  $G$ is the gravitational constant;  we set it equal to 1 for
simplicity.
  The gravitational potential is given by
\be\label{phi}\Phi(x)=-\int_{\RR^3} \frac{\rho(y)}{|x-y|}dy =-\rho\ast \frac{1}{|x|},\ee where $\ast$ denotes convolution.

Throughout this paper, we assume that the pressure function $p(\rho)$ satisfies the usual $\g$-law,
\be p(\rho)=\rho^{\g},\  \rho\ge 0, \ee
for some  constant $\g>1$.

In this paper, we are interested in the stationary axi-symmetric solutions of (\ref{MHD})  which represent an important class of equilibrium configurations. The stationary solutions  ( ${\bf v}={\bf 0}$) satisfy the following system:

\begin{equation}\label{1}
\begin{cases}
&\nabla p(\rho)=-\rho \nabla \Phi+\frac{1}{4\pi}(\nabla\times {\bf B})\times {\bf B},\\
& \nabla\cdot {\bf B}=0, \\
&\Delta \Phi=4\pi \rho.
\end{cases}
\end{equation}

There have been extensive studies on gaseous stars without taking  magnetic effects into account, both for non-rotating and rotating stars;  the reader may refer to \cite{ch, AB, Au, CF, FT1, FT2, Li1, Li2, LS, 38, mc} for the existence and properties of those solutions, and to
\cite{lin, lebovitz, lebovitz1, 44, 45, Rein, guo1, Jang, janginstability, 90} for stability and instability (nonlinear or linear) in various settings.  However, as far as we know,  there have been no rigorous mathematical results on  magnetic stars. The effects of magnetic fields arise in some physically interesting and important phenomena in astrophysics; eg.~solar flares. As noted in \cite{11}: ``{\it The coupling between magnetic and thermomechanical degrees of freedom is observed in the solar flares  (eruption phenomena
in the coronal region of the Sun). During this spectacular event, a violent brightening
is produced in the solar atmosphere where a huge amount of energy ($\sim\  10^{25}$ joules) is
released in a matter of few minutes, and associated to a large coronal mass ejection.
Magnetic reconnection is thought to be the mechanism responsible for this conversion of magnetic energy into heat and fluid motion. }'' The aim of this paper is to give the first proof of the existence of stationary magnetic star solutions with  prescribed total mass.

We prove our existence theorem via a variational technique as done in the non-magnetic case; see eg.~\cite{AB, Li2, FT2, 44, 45, Rein}. In these papers, the idea is to minimize an energy functional over a certain class $W_{M}$ of $\rho \in L^{\gamma}(\mathbb{R}^{3})$, $\gamma \geq 4/3$, subject to a total mass constraint
\[
	\int_{\mathbb{R}^{3}}\rho(x)dx = M,
\]
where $M$ is a given positive number. The principal mathematical difficulty is that the energy functional is not of fixed sign. In our case where the Euler-Poisson equations are coupled to a magnetic field, the problem becomes more challenging.

The coupling to a magnetic field alone arises because stars seldom have a net charge (\cite{lin}), so the electric field vanishes. Thus in order to take electro-magnetic effects into account, a non-trivial current ${\textbf{J}}$ must be present. Since the current in a star is quite complicated (and not known even for the Sun), we take a special ansatz for ${\textbf{J}}$. Proving the existence of a solution to the equations (\ref{1}) with this ansatz demonstrates the consistency of our model.

There are two important inequalities needed in our existence results; namely, if $F(\rho)$ denotes the energy functional defined on some class of functions $W_{M}$, then we need to prove
\begin{equation}
	\inf_{\rho \in W_{M}} F(\rho) < 0, \tag{i}
\end{equation}
and
\begin{equation}
	-\infty < \inf_{\rho \in W_{M}} F(\rho). \tag{ii}
\end{equation}
Inequality (i) shows that the gravitational energy dominates the other terms in the energy functional so that the star ``holds together." The second inequality implies that on any minimizing sequence, the energy functional is bounded from below.

In $\S$2 we set up the problem in a convenient manner. In $\S$3 we frame the problem variationally. In $\S$4 we prove the main theorem. This states that if $\gamma > 2$, the energy functional has a minimizer in $W_{M}$.

Our method of proof consists of deriving an elliptic equation for the magnetic potential $\psi$ in cylindrical coordinates, $r = (x_{1}^{2} + x_{2}^{2})^{1/2}$, $z = x_{3}$; namely
\[
	\psi_{rr} - \frac{1}{r}\psi_{r} + \psi_{zz} = -4\pi\beta r^{2}\rho
\]
where $\rho \in W_{M}$ and $\beta$ is a free parameter. To solve this equation we first transform $\psi(x)$ to a certain function $\chi(x)$, $x \in \mathbb{R}^{3}$. Then we extend $\chi$ and $\rho$ to functions $\chi_{e}$ and $\rho_{e}$ on $\mathbb{R}^{5}$ which satisfy
\[
	\Delta_{5}\chi_{e} = -4\pi\beta\rho_{e},
\]
where $\Delta_{5}$ denotes the 5-Laplacian. This enables us to write $\chi_{e}$ as a convolution of $\rho_{e}$ with the Green's function for $\Delta_{5}$. Using H$\ddot{\text{o}}$lder's inequality together with Young's inequality, we can estimate $\chi_{e}$ and thus $\psi$ too. These estimates are used to prove (i) and (ii) if $\gamma > 2$.

In the appendix we extend our results to $\gamma = 2$ for sufficiently small $\beta$. This result seems relevant for computing a ``Chandrasekhar (mass) limit" of certain recently discovered white dwarf stars, cf.~\cite{das}. By further restricting the class $W_{M}$, we employ Riesz potentials (\cite{gilbargtrudinger}), to extend our results to $\gamma > 8/5$. In a second appendix, we prove the non-existence of stationary spherically symmetric magnetic stars; cf.~\cite{38}.

%Section 2: Formulation of the Problem%
\section{Formulation of the Problem}

We consider axi-symmetric solutions of (\ref{1}).
Thus if $x=(x_1, x_2, x_3)\in \RR^3$, let
$r=\sqrt {x_1^2+x_2^2},\ z=x_3$. The solutions we seek have the form
\be\label{hh1}\begin{cases}
& \rho(x)= \rho(r, z)), \Phi(x)= \Phi(r, z),\\
& {\bf B}(x)= B^r(r, z){\bf e}_r+ B^{\theta}(r, z){\bf e}_{\theta}+B^z(r, z){\bf e}_3.
 \end{cases}\ee
  Here
  \be {\bf e}_r=(x_1/r, x_2/r,  0)^\mathrm{T},\ {\bf e}_{\theta}=(-x_2/r,  x_1/r,\ 0)^\mathrm{T},\ {\bf e}_3=(0, 0, 1)^\mathrm{T},\ee
  so $\{{\bf e}_r, \ {\bf e}_{\theta},\ {\bf e}_3\}$ is the standard orthogonal  basis in cylindrical coordinates.  In this case,
   \be\label{2}
  {\bf B}=(\frac{x_1}{r}B^r-\frac{x_2}{r}B^{\theta}){\bf i}+(\frac{x_2}{r}B^r+\frac{x_1}{r}B^{\theta}){\bf j}+B^z{\bf k},
  \ee
 and thus
   \be\label{curltoday}
   \nabla\times {\bf B}=(\frac{x_2}{r}g-\frac{x_1}{r}\pa_z B^{\theta}){\bf i}
                                         -(\frac{x_1}{r}g+\frac{x_2}{r}\pa_z B^{\theta}){\bf j}
                                         +(\frac{1}{r}B^{\theta}+\pa_r B^{\theta}){\bf k},\ee
where
   \be\label{eq2.5}g=(\pa_r B^z-\pa_z B^r).\ee
   Furthermore,
    \begin{align}\label{curl1}
   &(\nabla\times {\bf B})\times {\bf B}\notag\\
   &=\left[-\frac{x_1}{r}\left(gB^z+\frac{(B^{\theta})^2}{r}+B^{\theta}\pa_rB^{\theta}\right)
     -\frac{x_2}{r}\left(B^z\pa_zB^{\theta}+\frac{B^rB^{\theta}}{r}+B^r\pa_r B^{\theta}\right)\right]{\bf i}\notag\\
     &+\left[-\frac{x_2}{r}\left(gB^z+\frac{(B^{\theta})^2}{r}+B^{\theta}\pa_rB^{\theta}\right)+\frac{x_1}{r}\left(B^z\pa_zB^{\theta}+\frac{B^rB^{\theta}}{r}+B^r\pa_r B^{\theta}\right)\right]{\bf j}\\
  &+(gB^r-B^{\theta}\pa_z B^{\theta}){\bf k}.\notag \end{align}
Also, it is easy to show
  \be\label{3}
   \nabla p(\rho)+\rho \nabla \Phi
   =\frac{x_1}{r}(\pa_r p(\rho)+\rho\pa_r\Phi){\bf i}
   +\frac{x_2}{r}(\pa_r p(\rho)+\rho\pa_r\Phi){\bf j}
   +(\pa_z p(\rho)+\rho\pa_z\Phi){\bf k}.
   \ee
   Thus (\ref{1}), (\ref{curl1}) and (\ref{3}) imply that
   $$B^z\pa_zB^{\theta}+\frac{B^rB^{\theta}}{r}+B^r\pa_r B^{\theta}=0.$$
   If $B^{\theta}=0$, then this is clearly satisfied.  For simplicity, we consider the case \be\label{btheta} B^{\theta}=0.\ee
   With this assumption, (\ref{curltoday}) reduces to
   $$\label{curl2}
   \nabla\times {\bf B}=g(\frac{x_2}{r}{\bf i}-\frac{x_1}{r}{\bf j}),$$
     where $g$ is given in \eqref{eq2.5}.
     The magnetic current ${\bf J}$ is defined by
     $$\nabla\times {\bf B}=\frac{4\pi}{c}{\bf J}. $$
     If $f=\frac{cg}{4\pi r}$, where $c$ is the speed of the light,
our ansatz for the current density ${\bf J}$ is the simplest one that supports a magnetic field; namely
   \be\label{currentansatz} {\bf J}=(x_2{\bf i}-x_1{\bf j})f(r, z). \ee
   Conversely, we can show that if the current density ${\bf J}$ takes the form of \eqref{currentansatz}, then $B^{\theta}=0.$. Indeed, with ${\bf B}$ given in (\ref{hh1}), we have:
\begin{align}\label{curl} &\frac{4\pi}{c}{\bf J}=\nabla\times {\bf B}\notag\\
&= \left(\frac{x_2}{r}(\pa_rB^z-\pa_zB^r)-\frac{x_1}{r}\pa_z B^{\theta}\right){\bf i}\\
  &+\left(\frac{x_1}{r}(\pa_zB^r-\pa_rB^z)-\frac{x_2}{r}\pa_z B^{\theta}\right){\bf j}\notag\\
  &+\left(\frac{1}{r}B^{\theta}+\pa_r B^{\theta}\right){\bf k}.\notag \end{align}
 Therefore, if  ${\bf J}$ takes the form of \eqref{currentansatz}, we have:
   \be\label{btheta2} B^{\theta}(r, z)=0,\ee
   and in this case,
   \be\label{bb} \frac{1}{r}(\pa_r B^z-\pa_z B^r)=\frac{4\pi}{c}f(r, z).\ee
   Next $\nabla\cdot {\bf B}=0$ implies:
   \be\label{divergence} \pa_r B^r+\frac{1}{r}B^r+\pa_z B^z=0.\ee
An easy calculation gives
   \be
   (\nabla\times {\bf B})\times {\bf B}=-\frac{x_1}{r}B^z(\pa_r B^z-\pa_z B^r){\bf i}
                                         -\frac{x_2}{r}B^z(\pa_r B^z-\pa_z B^r){\bf j}
                                         +B^r((\pa_r B^z-\pa_z B^r){\bf k}.\ee
This together with (\ref{bb}) implies
   \be\label{baby} \frac{1}{4\pi}(\nabla\times {\bf B})\times {\bf B}=\frac{f}{c}\left(-x_1B^z{\bf i}
   -x_2B^z{\bf j}+r B^r{\bf k}\right).\ee
  Therefore, we have, by \eqref{1}, \eqref{3} and \eqref{baby},
   \be\label{maineq}\begin{cases}
   &\pa_r p(\rho)+\rho\pa_r\Phi=-\frac{r f}{c}B^z,\\
   &\pa_z p(\rho)+\rho\pa_z\Phi=\frac{r f}{c}B^r.
   \end{cases}\ee

   Let \be\label{irho} i(\rho)=\int_0^{\rho} \frac{p'(s)}{s}ds.\ee Then \eqref{maineq} implies
   \be\label{maineq1}\begin{cases}
   &\rho \pa_r (i(\rho)+\Phi)=-\frac{r f}{c}B^z,\notag\\
   &\rho \pa_z (i(\rho)+\Phi)=\frac{ r f}{c}B^r.
   \end{cases}\ee
   Now writing \eqref{divergence} in the form
   \be\label{divergence1} \pa_r (r B^r)+\pa_z (r B^z)=0\ee
   enables us to introduce a magnetic potential $\psi$ such that
   \be\la{magpotential}
   \pa_z \psi =rB^r,\  \pa_r\psi=-rB^z.\ee

   In this paper, we consider the case when
   \be\label{frhobeta}\frac{f}{c\rho}=const=:\beta. \ee
Then it follows from \eqref{maineq1} and \eqref{magpotential} that
$$\nabla (i(\rho)+\Phi-\beta\psi)=0, \ {\rm whenever~} \rho>0. $$
Hence,
  \be\la{zero1} i(\rho)+\Phi-\beta\psi=const=:\lambda, \ {\rm in~the~region~} \rho>0,\ee
  where $i(\rho)$ is given by \eqref{irho}, and
  $\Phi$ is given by
  \be\la{grho}\Phi(x)=-\int_{\mathbb{R}^3} \frac{\rho(y)}{|x-y|}dy=:-\mathfrak{G}(\rho)(x).\ee
  Then solving \eqref{zero1} with the total mass constraint
\be\label{massconstraint}
\int_{\mathbb{R}^3} \rho(x)dx=M, \ {\rm~ for~some ~given~positive~constant~} M,
\ee
is the problem we consider in this paper.

% Section 3: Variational Formulation
\section{Variational formulation}
 For $p$ satisfying the
  $\gamma$-law, (1.3), let
  \be\label{arho}A(\rho)=\frac{p(\rho)}{\gamma-1}.\ee
  Then
  \be\label{ai}i(\rho)=A'(\rho).\ee
  Also, the gravitational potential is given by \eqref{grho}, and we write $\Phi=-\mathfrak{G}(\rho)$.
  The magnetic potential $\psi$ satisfies
  \be\la{psi} {\rm div} (\frac{1}{r^2}\nabla \psi)=-4\pi\beta \rho,\ee
  where $r=\sqrt {x_1^2+x_2^2}.$
  Let $G(x, y)$ for $x, y\in \mathbb{R}^3$ be the Green's function for the operator ${\rm div} (\frac{1}{r^2}\nabla)$, i. e.,
  \be\la{green} LG=: {\rm div} (\frac{1}{r^2}\nabla G(x, y))=\delta(x-y)),\ee
  where $\delta(x-y)$ is the Dirac mass.
  Since $L$ is symmetric, we have
  \be\la{ajoint} <L\psi, G>=<\psi, LG>=<\psi, \delta(x-y)>=\psi(y), \ee
  where the inner product $<\cdot, \cdot>$ is taken in $L^2$. Thus we have the following integral representation for $\psi$, namely,
  \be\la{integral} \psi(x)= \mathfrak{P}(\rho), \ee
  where the integral operator $\mathfrak{P}$ is given by
  \be\la{mathfrakpsi}\mathfrak{P}(\rho)=-4\pi\beta \int_{\mathbb{R}^3} G(x, y) \rho(y)dy.\ee
  Then, equation \eqref{zero1} can be written as
  \be\la{zero} i(\rho)-\mathfrak{G}(\rho)-\beta\mathfrak{P}(\rho)=\lambda, \ {\rm whenever~} \rho>0.\ee
\vskip 0.3cm
  In order to state our results, let's review the following results for the non-rotating non-magnetic star solutions:  For $0<M<+\infty$, define $X_M$ by
\begin{align}\label{5.2} X_M&=\{\r: \RR^3\to \RR, \rho\ge 0, a.e.,\
 \int_{\mathbb{R}^3}\rho(x)dx=M, {\rm and}\notag\\
&\int_{\mathbb{R}^3}
  [A(\rho(x))+\frac{1}{2}\rho(x) \mathfrak{G}(\rho)(x)]dx<+\infty \},  \end{align}
  where $A(\r)$ is the function given in \eqref{arho}.
 For $\rho\in X_M$,  we define the {\bf energy functional} $\tilde F$ for non-rotating non-magnetic  stars by
  \begin{align}\label{55.3}
  \tilde F(\rho)=\int
  [A(\rho(x))-\frac{1}{2}\rho(x) \mathfrak{G}(\rho(x))]dx.
  \end{align}
 We then have
 \begin{thm}\label{5.1'} Suppose that the pressure function $p(\rho)=\rho^{\gamma}$ with $\gamma>4/3$.
Let $\hat \rho$ be a minimizer of the energy functional $\tilde F$ in $X_M$ and let
\be\label{G1'}
\Gamma_M=\{x\in \RR^3:\  \hat \rho (x)>0\}, \ee then
 there exists a constant $\lambda_N$
such that
\be\label{lambda111}
\begin{cases}
& A'(\hat \rho(x))-B\hat \rho(x)=\lambda_M, \qquad x\in \Gamma_M,\\
&-\mathfrak{G}(\hat \rho)(x)\ge \lambda_N, \qquad x\in \RR^3-\Gamma_M.\end{cases}\ee
\end{thm}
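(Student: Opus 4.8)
The plan is to obtain the two stated relations as the first-order (Euler--Lagrange) conditions for the constrained minimization of $\tilde F$ over $X_M$, treating the one-sided constraint $\rho\ge 0$ and the mass constraint $\int\rho=M$ separately. First I would record the first variation. Since $\mathfrak{G}$ is convolution with $1/|x|$ and hence symmetric, for a perturbation $\phi$ one computes
\[
\frac{d}{d\epsilon}\Big|_{\epsilon=0}\tilde F(\hat\rho+\epsilon\phi)=\int_{\RR^3}\big[A'(\hat\rho)-\mathfrak{G}(\hat\rho)\big]\phi\,dx ,
\]
the factor $\tfrac12$ in the self-energy disappearing because both factors are varied. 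It is then natural to set $u:=A'(\hat\rho)-\mathfrak{G}(\hat\rho)$; this is the quantity written $A'(\hat\rho)-B\hat\rho$ in the statement, with $B=\mathfrak{G}$ the Newtonian potential operator. Before using this I would check the computation is legitimate: for $\gamma>4/3$ the membership $\hat\rho\in L^\gamma$ gives $A'(\hat\rho)=\tfrac{\gamma}{\gamma-1}\hat\rho^{\gamma-1}\in L^{\gamma/(\gamma-1)}_{\mathrm{loc}}$, while Hardy--Littlewood--Sobolev makes $\mathfrak{G}(\hat\rho)$ finite and locally integrable, so $u$ is a well-defined locally integrable function and one may differentiate under the integral sign for bounded, compactly supported $\phi$.

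Next I would use \emph{mass-preserving} variations supported in the interior of $\Gamma_M$. Since $\hat\rho>0$ there, for any bounded $\phi$ with support in $\{\hat\rho\ge c\}\Subset\Gamma_M$ and $\int\phi=0$, the competitor $\hat\rho+\epsilon\phi$ is nonnegative and has mass $M$ for $|\epsilon|$ small; minimality then forces $\int u\,\phi\,dx=0$ for all such two-sided $\phi$. Testing against $\phi=\psi-\big(|B_0|^{-1}\int\psi\big)\mathbf 1_{B_0}$ for a fixed reference ball $B_0\Subset\Gamma_M$ shows $u$ equals the single constant $\lambda_M:=|B_0|^{-1}\int_{B_0}u$ a.e.\ on $\Gamma_M$. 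Because mass can be transported between any two regions of $\Gamma_M$, one common constant is pinned down even if $\Gamma_M$ is disconnected, and this yields the first relation $A'(\hat\rho)-\mathfrak{G}(\hat\rho)=\lambda_M$ on $\Gamma_M$.

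To obtain the inequality on $\RR^3\setminus\Gamma_M$ I would use \emph{one-sided, mass-transferring} variations $\phi=\phi_+-\phi_-$, with $\phi_+\ge 0$ supported on a set $E\subset\{\hat\rho=0\}$ and $\phi_-\ge0$ supported in $\Gamma_M$, normalized so that $\int\phi_+=\int\phi_-$. Only additions are admissible on $E$, so only $\epsilon\ge0$ is allowed; minimality gives $0\le\int u\,\phi\,dx=\int_E(u-\lambda_M)\phi_+\,dx$ for every $\phi_+\ge0$, whence $u\ge\lambda_M$ a.e.\ on $\{\hat\rho=0\}$ after localizing via Lebesgue density. Since $A'(0)=0$ for the $\gamma$-law, $u=-\mathfrak{G}(\hat\rho)$ on that set, and we conclude $-\mathfrak{G}(\hat\rho)\ge\lambda_M$ on $\RR^3-\Gamma_M$, which is the second relation (so $\lambda_N=\lambda_M$).

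The main obstacle I anticipate is not the formal Lagrange-multiplier bookkeeping but the rigorous construction of admissible competitors meeting both constraints simultaneously, together with the control of the expansion remainders. In particular one must keep the quadratic self-interaction $-\tfrac12\epsilon^2\!\int\!\int \phi(x)\phi(y)/|x-y|\,dx\,dy$ finite and $O(\epsilon^2)$, and handle the singularity of $A''(\rho)=\gamma\rho^{\gamma-2}$ near $\partial\Gamma_M$ when $\gamma<2$ by choosing variations supported where $\hat\rho\ge c>0$ and exhausting $\Gamma_M$ by such sets. These points are routine given the integrability noted above for $\gamma>4/3$, but they are where the genuine care lies, and they are also what lets one pass from the integrated (in)equalities to the pointwise a.e.\ statements claimed.
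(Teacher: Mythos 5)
The paper gives no inline proof of this theorem---it simply cites \cite{AB} and \cite{Rein}---and your argument is precisely the standard first-variation proof from those references: two-sided mass-preserving variations supported on sets $\{\hat\rho\ge c\}$ to pin a single Lagrange multiplier $\lambda_M$ (with the $\tfrac12$ in the self-energy cancelling by symmetry of the kernel), then one-sided mass-transferring variations into $\{\hat\rho=0\}$, where $A'(0)=0$, yielding $-\mathfrak{G}(\hat\rho)\ge\lambda_M$, i.e.\ $\lambda_N=\lambda_M$. Your proposal is correct and essentially the same approach as the cited proof; the only cosmetic repair is that your reference set $B_0$ should be taken as a positive-measure subset of some $\{\hat\rho\ge c_0\}$ rather than a ball ``compactly contained'' in the merely measurable set $\Gamma_M$.
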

\noindent The proof of this theorem is well-known, cf.  \cite{AB} or \cite{Rein}.
\vskip 0.2 cm
\begin{rem} We call the minimizer $\hat \rho$  of the functional $\tilde F$ in $X_M$  a {\it non-rotating non-magnetic star solution}.
\end{rem}
\vskip 0.2cm
\noindent
\begin{rem} For $\gamma>4/3$, it was proved in  \cite{38} that such a minimizer  $\hat \rho$  of the functional $\tilde F$ in $X_M$ exits and  is actually radial and unique, and has  compact support, i. e., for the given total mass $M$, there exists a unique constant $R_M>0$ such that
\be\la{radius}
\begin{cases}
&\hat\rho(x)>0, \ {\rm if~} |x|<R_M, \\
&\hat\rho(x)=0, \ {\rm if~} |x|\ge R_M.
\end{cases}
\ee
In this case, we call $R_M$ the {\it radius} of the  non-rotating non-magnetic star solution with prescribed total mass $M$ $^3$.\end{rem} \footnotetext[3]{In the appendix, we prove that radial solutions do not exist when magnetic fields are present.}

  Let $W_M$ be the following function space
  \begin{align*}W_M=&\{\r: \RR^3\to \RR,\ \r  {\rm~is~axisymmetric, ~}\rho\ge 0, a.e.,\ \rho\in L^1(\RR^3)\cap L^{\gamma}(\RR^3),\\
 &\int\rho(x)dx=M\},\end{align*}
 and let $W_M^{\ast}$ be defined by
 \be\label{key}
 W_M^{\ast}:=\{\rho\in W_M: \rho(r, z)=0 \ {\rm for~} r\ge \mathfrak{R}\},
\ee
for some positive constant  $\mathfrak{R}\ge R_M$ where $R_M$ is the radius of the non-rotating non-magnetic star solution with prescribed total mass $M$, given in \eqref{radius} .
\vskip 0.2cm

  Define a functional $F$ on  $W_M^{\ast}$ by
  \be\la{functional} F(\rho)=\int \left (A(\rho)-\frac{1}{2}\rho\mathfrak{G}(\rho)-\frac{1}{2}\beta\rho\mathfrak{P}(\rho)\right)dx.\ee
  We now show that a minimizer of the functional $F$ in $W_M^{\ast}$ solves equation \eqref{zero}.
  \begin{thm}\label{criticalpt} Let $\tilde \r$ be a minimizer of the energy functional $F$ in $W_M^{\ast}$ and let \be\label{G1}
\Gamma_M=\{x\in \RR^3:\  \tilde \r(x)>0\}.\ee If $\g>6/5$, then $\tilde \r\in C(\RR^3)\cap C^1(\Gamma)$.
Moreover,
 there exists a constant $\lambda_M$
such that
\be\label{lambda1}
 A'(\tilde \r(x))-\mathfrak{G}(\tilde \r)(x)-\beta \mathfrak{P}(\tilde \r)(x)=\lambda_M, \qquad x\in \Gamma.\ee
\end{thm}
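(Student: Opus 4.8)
The plan is to treat the statement in two stages: a constrained first-variation argument producing the Euler--Lagrange identity \eqref{lambda1}, followed by an integrability bootstrap that upgrades this identity to the stated regularity. For the first stage I would fix the minimizer $\tilde\rho$ and perturb it by mass-preserving, axisymmetric, bounded functions $\phi$ supported in $\Gamma_\varepsilon:=\{\tilde\rho>\varepsilon\}$ with $\int_{\RR^3}\phi\,dx=0$; for small $|t|$ the competitor $\rho_t:=\tilde\rho+t\phi$ is nonnegative, has mass $M$, and stays supported in $\{r<\mathfrak{R}\}$, so it lies in $W_M^*$. Computing the first variation of \eqref{functional} and using that both $\mathfrak{G}$ and $\mathfrak{P}$ are self-adjoint---the kernel $1/|x-y|$ is symmetric, and $G(x,y)=G(y,x)$ since $L$ is formally self-adjoint (cf.~\eqref{ajoint})---each quadratic term contributes a factor $2$ that cancels the $\tfrac12$, giving
\[
\frac{d}{dt}\Big|_{t=0}F(\rho_t)=\int_{\RR^3}\big(A'(\tilde\rho)-\mathfrak{G}(\tilde\rho)-\beta\mathfrak{P}(\tilde\rho)\big)\,\phi\,dx .
\]
Since $t$ sweeps a two-sided neighbourhood of $0$, minimality forces this to vanish for every such $\phi$, and testing against all mean-zero $\phi$ shows $A'(\tilde\rho)-\mathfrak{G}(\tilde\rho)-\beta\mathfrak{P}(\tilde\rho)$ is a.e.\ constant on each $\Gamma_\varepsilon$; letting $\varepsilon\downarrow0$ the sets $\Gamma_\varepsilon$ exhaust $\Gamma$ and pin down a single constant $\lambda_M$, which is \eqref{lambda1}. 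One-sided variations that deposit mass where $\tilde\rho=0$ while removing it inside $\Gamma$ yield in addition the inequality $\mathfrak{G}(\tilde\rho)+\beta\mathfrak{P}(\tilde\rho)\le-\lambda_M$ off $\Gamma$ (using $A'(0)=0$), which I will use to handle continuity across $\partial\Gamma$.

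For the regularity stage, $A'(\rho)=\tfrac{\gamma}{\gamma-1}\rho^{\gamma-1}$ lets me rewrite \eqref{lambda1} as
\[
\tilde\rho=\Big[\tfrac{\gamma-1}{\gamma}\big(\lambda_M+\mathfrak{G}(\tilde\rho)+\beta\mathfrak{P}(\tilde\rho)\big)\Big]_+^{1/(\gamma-1)},
\]
the positive part being legitimate by the off-support inequality above, so it suffices to prove that $H:=\lambda_M+\mathfrak{G}(\tilde\rho)+\beta\mathfrak{P}(\tilde\rho)$ is continuous. Starting from $\tilde\rho\in L^1\cap L^\gamma$, the Hardy--Littlewood--Sobolev bound for the Newtonian kernel---together with the matching estimate for $\mathfrak{P}$ extracted from the $5$-Laplacian representation of the introduction, where the weight $r^2$ is absorbed by the cutoff $r\le\mathfrak{R}$---places $\mathfrak{G}(\tilde\rho),\mathfrak{P}(\tilde\rho)\in L^q$ with $1/q=1/\gamma-2/3$. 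Feeding this back through the displayed identity gives $\tilde\rho\in L^{q(\gamma-1)}$, and iterating produces exponents $p_k$ whose reciprocals $u_k:=1/p_k$ obey $u_{k+1}=(u_k-\tfrac23)/(\gamma-1)$. The hypothesis $\gamma>6/5$ is precisely $1/\gamma<\tfrac{2}{3(2-\gamma)}$, i.e.\ $u_0$ lies below the repelling fixed point $u^\ast=\tfrac{2}{3(2-\gamma)}$, so $u_k$ strictly decreases and falls below $2/3$ after finitely many steps; once $p_k>3/2$ the two potentials are bounded and continuous, whence $H\in C(\RR^3)$ and $\tilde\rho=[\tfrac{\gamma-1}{\gamma}H]_+^{1/(\gamma-1)}\in C(\RR^3)$. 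A final pass with $\tilde\rho$ now bounded gives $\mathfrak{G}(\tilde\rho),\mathfrak{P}(\tilde\rho)\in C^1$ by classical potential theory, and since $s\mapsto s^{1/(\gamma-1)}$ is smooth where $H>0$, we conclude $\tilde\rho\in C^1(\Gamma)$.

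The main obstacle is the regularity stage, not the variational one. Two points need genuine work: first, the mapping estimate for the magnetic operator $\mathfrak{P}$ with the correct gain $1/q=1/\gamma-2/3$---this is exactly where the transformation to the $5$-Laplacian and the H\"older/Young estimates flagged in the introduction must be made quantitative, and where the $r^2$ weight forces essential use of the support restriction built into $W_M^*$; second, verifying that $\gamma>6/5$ makes the iteration terminate, i.e.\ that the orbit $u_k$ not merely decreases but crosses $2/3$ in finitely many steps (for $\gamma\ge 3/2$ this is immediate, the content being the range $6/5<\gamma<3/2$). The variational step is comparatively routine; its only delicate features are the bookkeeping of the self-adjointness of $\mathfrak{G},\mathfrak{P}$ and the simultaneous admissibility of perturbations under nonnegativity, fixed mass, axisymmetry, and bounded $r$-support.
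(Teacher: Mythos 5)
Your variational stage coincides with the paper's entire written proof of Theorem \ref{criticalpt}: the paper likewise splits $F=\tilde F+I_2$, differentiates along two-sided mass-preserving perturbations $\rho+t\sigma\in W_M^{\ast}$, uses the symmetry $G(x,y)=G(y,x)$ to identify the derivative of the magnetic term as $-\beta\int \mathfrak{P}(\rho)\sigma\,dx$ (its \eqref{I22}), arrives at the first-variation formula \eqref{ff}, and then disposes of everything else --- the single multiplier $\lambda_M$ on $\Gamma$, the complementary inequality off the support, and the regularity $\tilde\rho\in C(\RR^3)\cap C^1(\Gamma)$ --- by the phrase ``using a similar argument as in \cite{AB}.'' So your second stage is genuinely additional content: a self-contained bootstrap replacing that citation. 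Its architecture (rewrite \eqref{lambda1} as $\tilde\rho=\bigl[\tfrac{\gamma-1}{\gamma}H\bigr]_+^{1/(\gamma-1)}$ with $H=\lambda_M+\mathfrak{G}(\tilde\rho)+\beta\mathfrak{P}(\tilde\rho)$, iterate potential estimates, read the threshold off the repelling fixed point $u^{\ast}=\tfrac{2}{3(2-\gamma)}$) is the standard Auchmuty--Beals scheme correctly adapted, and your exponent arithmetic for the Newtonian term is right: $1/\gamma<u^{\ast}$ is exactly $\gamma>6/5$, and the affine map with slope $1/(\gamma-1)>1$ pushes $u_k$ below $2/3$ in finitely many steps.

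Two caveats, the first being exactly the point you flag as needing work --- and there your sketch, taken literally, would fail. Hardy--Littlewood--Sobolev for the kernel $|x|^{-3}$ in $\RR^5$ gains only $2/5$ in the reciprocal exponent, $1/q=1/p-2/5$, not the $2/3$ you assert for $\mathfrak{P}$; running your iteration with $u_{k+1}=(u_k-\tfrac{2}{5})/(\gamma-1)$ raises the threshold to $\gamma>10/7$, leaving $6/5<\gamma\le 10/7$ uncovered. The rescue is that on $W_M^{\ast}$ the operator $\mathfrak{P}$ is much better than Newtonian: integrating the five-dimensional kernel over the $S^3$-orbit of a source point shows that the reduced three-dimensional kernel of $\rho\mapsto\psi=r^2\chi$ has only a logarithmic diagonal singularity for $r,s\le\mathfrak{R}$, bounded by $C_{\mathfrak{R}}\bigl(1+\bigl|\log|x-y|\bigr|\bigr)$, and decays like $|x-y|^{-3}$ along the axis at infinity; hence $\mathfrak{P}$ maps $L^p$ into bounded continuous functions for every $p>1$, the Newtonian potential is the true bottleneck, and your $6/5$ threshold stands --- but via this kernel reduction, not via Young/H\"older in $\RR^5$ alone. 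Second, your one-sided variations may deposit mass only where $r<\mathfrak{R}$, since competitors must remain in $W_M^{\ast}$; thus the inequality $\mathfrak{G}(\tilde\rho)+\beta\mathfrak{P}(\tilde\rho)\le-\lambda_M$ is obtained on $\{r<\mathfrak{R}\}\setminus\Gamma$ only, and continuity of $\tilde\rho$ across the artificial wall $\{r=\mathfrak{R}\}$ (should $\Gamma$ press against it) is not forced by minimization --- a gap the paper's citation of \cite{AB} also leaves unaddressed.
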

\begin{proof}
 We write $F(\rho)$ in two parts:
 \be\la{f} F(\rho)=\tilde F(\rho)+I_2(\rho),\ee
 where
 $$\tilde F(\rho)=\int_{\mathbb{R}^3} \left (A(\rho)-\frac{1}{2}\rho\mathfrak{G}(\rho)\right)dx,$$
 and
 $$I_2(\rho)=-\frac{1}{2}\beta\int_{\mathbb{R}^3}\rho\mathfrak{P}(\r) dx.$$
 For and $\rho\in W_M^{\ast}$ and $\rho+t\sigma\in W_M^{\ast}$ for $t\in \mathbb{R}$ and $\int_{\mathbb{R}^3}\sigma dx=0$, then using the same argument as in \cite{AB}, we have
 \be\la{g}\lim_{t\to 0} \frac{\tilde F(\rho+t\sigma)-\tilde F(\rho)}{t}=\int (i(\rho)-\mathfrak{G}(\r))\sigma dx.\ee
 We calculate $I_2(\rho+t\sigma)-I_2(\rho)$ as follows:  by using \eqref{integral} and (3.7), we obtain
\begin{align}
 & I_2(\rho+t\sigma)-I_2(\rho) \notag\\
 &=2\pi \beta^2\int\int G(x, y)\{(\rho+\t\sigma)(x)(\rho+t\sigma)(y)-\rho(x)\rho(y)\}dxdy\notag\\
 &=2\pi t \beta^2\int\int G(x, y)(\sigma(x)\rho(y)+\rho(x)\sigma(y))dxdy+2\pi\beta^2 t^2\int\int G(x, y)\sigma(x)\sigma(y)dxdy.
 \end{align}
Since $G(x, y)=G(y,x)$, we thus have
 \be\la{I22}\lim_{t\to 0} \frac{I_2(\rho+t\sigma)-I_(\rho)}{t}=4\pi \beta^2 \int\int G(x, y)(\rho(y)\sigma(x)dxdy=-\beta\int \mathfrak{P}(\r)(x)\sigma(x)dx.\ee
 Therefore, by \eqref{g} and \eqref{I22}, we get
 \be\la{ff} \lim_{t\to 0} \frac{F(\rho+t\sigma)-F(\rho)}{t}=\int (i(\rho)-\mathfrak{G}(\r)-\beta \mathfrak{P}(\r))\sigma (x)dx,\ee
 for all $\sigma$ such that $\int\sigma(x)dx=0$. This, together with \eqref{g} proves the theorem, using a similar argument as in \cite{AB}.\end{proof}
\vskip 0.2cm
 The main theorem of this paper is the following:
 \vskip 0.2cm
 \begin{thm}\label{maintheorem} Suppose  that $\gamma>2$.   Then
the following three statements hold:\\
(1)
\be\label{ne}\inf_{W_M^{\ast}} F(\rho)<0, \ee
and
\be\label{boundedbelow}
F(\rho)\ge C_1\int_{\mathbb{R}^3}\rho^{\gamma}d_3x-C_2, \qquad \rho\in W_M^{\ast},
\ee
for some positive constants constants $C_1$ and $C_2$ independent of $\rho$.
\vskip 0.2cm
\noindent (2) if $\{\r^i\}\subset W_M^{\ast} $ is a minimizing sequence for the functional $F$,
then there exists a sequence of vertical shifts $a_i{\bf e_3}$ ($a_i\in \RR$, ${\bf e_3}=(0, 0, 1)$),  a subsequence of $\{\r^i\}$,  (still labeled  $\{\r^i\}$),  and a function $\tilde \r\in W_M^{\ast}$, such that for any $\epsilon>0$ there exists $R>0$ with
\be\label{2.15}
\int_{a_i{\bf e_3}+B_R(0)}\r^i(x)dx\ge M-\epsilon, \quad i\in \mathbb{N},\ee
and
\be\label{2.16} T\r^i(x):=\r^i(x+a_i{\bf e_3})\rightharpoonup \tilde \r,\  weakly~in~L^{\g}(\RR^3),\ as\  i\to \infty.\ee

\vskip 0.2cm
\noindent Moreover\\
(3) $\tilde \r$ is a minimizer of $F$ in $W_M^{*}$.

\end{thm}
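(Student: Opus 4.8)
For part (1), I would get the strict negativity \eqref{ne} by comparison rather than by scaling, since the rigid support condition $r\le\mathfrak R$ is incompatible with the usual dilation argument. Let $\hat\rho$ be the radial minimizer of $\tilde F$ furnished by Theorem \ref{5.1'}, with support radius $R_M\le\mathfrak R$ as in \eqref{radius}; then $\hat\rho\in W_M^{\ast}$, and the classical theory gives $\tilde F(\hat\rho)=\inf_{X_M}\tilde F<0$. Since the magnetic term is $I_2(\rho)=2\pi\beta^2\iint G(x,y)\rho(x)\rho(y)\,dx\,dy$ with $G<0$ (as seen from the $5$-Laplacian representation), one has $I_2\le 0$, so by \eqref{f} $F(\hat\rho)=\tilde F(\hat\rho)+I_2(\hat\rho)\le\tilde F(\hat\rho)<0$, which proves \eqref{ne}.

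For the lower bound \eqref{boundedbelow}, the only positive term $\int A(\rho)=\tfrac{1}{\gamma-1}\int\rho^\gamma$ must dominate the two negative nonlocal terms. The gravitational part is handled in the Auchmuty--Beals manner: Hardy--Littlewood--Sobolev gives $\int\rho\mathfrak G(\rho)\le C\|\rho\|_{6/5}^2$, and interpolation against the fixed mass $\|\rho\|_1=M$ followed by Young's inequality yields $\tfrac12\int\rho\mathfrak G(\rho)\le\varepsilon\int\rho^\gamma+C(\varepsilon,M)$, valid because $\gamma>4/3$. The magnetic part is where $\gamma>2$ is needed. Setting $\chi=\psi/r^2$ converts \eqref{psi} into $\Delta_5\chi_e=-4\pi\beta\rho_e$ on $\mathbb R^5$, so $\chi_e$ is, up to a positive constant, the convolution of $\beta\rho_e$ with $|X|^{-3}$ and $|I_2|=c\beta^2\iint\frac{\rho_e(X)\rho_e(Y)}{|X-Y|^3}\,dX\,dY$. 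Splitting the kernel at $|X-Y|=1$ and applying Young's convolution inequality to the singular part (whose truncated kernel lies in $L^s(\mathbb R^5)$ for $s<5/3$) gives $|I_2|\le C\beta^2(\|\rho_e\|_1^2+\|\rho_e\|_\gamma^2)$. The bound $r\le\mathfrak R$ lets me replace the $\mathbb R^5$ norms by $\mathbb R^3$ norms, so $|I_2|\le C\beta^2(M^2+\|\rho\|_\gamma^2)$, and since $\gamma>2$ a final Young step absorbs $\|\rho\|_\gamma^2$ into $\varepsilon\int\rho^\gamma+C$. Collecting terms with $\varepsilon$ small gives \eqref{boundedbelow}.

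For part (2), \eqref{boundedbelow} together with $F(\rho^i)\to\inf F<+\infty$ bounds the sequence in $L^\gamma$, and with $\|\rho^i\|_1=M$ it is bounded in $L^1\cap L^\gamma$; reflexivity then yields a subsequence with $\rho^i\rightharpoonup\tilde\rho$ weakly in $L^\gamma$. The key structural feature is that $r\le\mathfrak R$ confines all mass to a fixed solid cylinder, so escape is possible only in the vertical direction; this reduces the concentration--compactness trichotomy to a one-dimensional problem in $z$ and explains the shifts $a_i{\bf e_3}$. Vanishing is excluded via \eqref{ne}: if $\sup_a\int_{a{\bf e_3}+B_R(0)}\rho^i\to0$, both nonlocal energies tend to $0$ and $\liminf F(\rho^i)\ge0$, contradicting $\inf F<0$. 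Choosing $a_i$ to capture a fixed fraction of the mass and passing to $T\rho^i$, I obtain the tightness \eqref{2.15} and the weak limit \eqref{2.16}; the limit $\tilde\rho$ is nonnegative, axisymmetric, and vanishes for $r\ge\mathfrak R$ (all closed conditions), while \eqref{2.15} with weak $L^1$ convergence on balls forces $\int\tilde\rho=M$, so $\tilde\rho\in W_M^{\ast}$.

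The main obstacle, and part (3), is excluding dichotomy and then passing the nonlocal energies to the limit. Since $\tfrac1{|x-y|}$ and $L={\rm div}(r^{-2}\nabla)$ are both invariant under $z$-translations, $F(T\rho^i)=F(\rho^i)$, and one must rule out a split into two lumps escaping to $z=\pm\infty$; the standard device is a strict subadditivity (``binding'') inequality for the infimum as a function of the mass, which here is delicate because of the rigid support constraint and must be argued from the negativity of both the gravitational and the magnetic interaction energies. Once tightness is secured, the gravitational term converges, $\int\rho^i\mathfrak G(\rho^i)\to\int\tilde\rho\mathfrak G(\tilde\rho)$, by the usual compactness of the Newtonian potential on tight sequences, and the identical argument applied in the $5$-dimensional representation gives $\int\rho^i\mathfrak P(\rho^i)\to\int\tilde\rho\mathfrak P(\tilde\rho)$; this magnetic convergence, resting on the Green's function estimates above, is the genuinely new ingredient. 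Finally $\int A(\cdot)$ is weakly lower semicontinuous in $L^\gamma$ while the two nonlocal terms converge, so $F(\tilde\rho)\le\liminf F(\rho^i)=\inf F$, and since $\tilde\rho\in W_M^{\ast}$ forces $F(\tilde\rho)\ge\inf F$, equality holds and $\tilde\rho$ is a minimizer, proving (3).
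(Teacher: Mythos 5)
Your proposal is correct and follows essentially the same route as the paper: strict negativity by comparison with the compactly supported non-magnetic minimizer $\hat\rho\in W_M^{\ast}$ together with the sign of the magnetic interaction ($G_5<0$ forces $I_2(\hat\rho)<0$), and the lower bound via the substitution $\psi=r^2\chi$, the 5-dimensional Green's function $|X-Y|^{-3}$, and H\"older--Young estimates in which $\gamma>2$ lets the magnetic term be absorbed into $\varepsilon\int\rho^{\gamma}$ (your near/far splitting of the kernel at $|X-Y|=1$ plays exactly the role of the paper's radial cutoff $\delta$, a cosmetic difference). For parts (2) and (3) the paper gives no details and defers to the concentration--compactness arguments of \cite{44} and \cite{45}, which is precisely the machinery your sketch reproduces.
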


\vskip 0.4cm
Notice that (3.24) implies $F$ is bounded from below. Thus any convergent minimizing sequence in
$W_M^{\ast}$ cannot tend to $-\infty$.

\section{Proof of Theorem \ref{maintheorem}}.

In this section we prove Theorem \ref{maintheorem}.  Statement (1) in  Theorem \ref{maintheorem} is crucial. With the aid of (1) in Theorem \ref{maintheorem}, (2) and (3) can be proved as in \cite{44} and \cite{45}. Therefore, the key is to prove (1) which is given by two lemmas.

First, we  prove that the functional $F(\rho)$ is bounded below on the set $W_M^{\ast}$ if $\gamma>2$.

\begin{lem}\label{lem4.2} Suppose  that $\gamma>2$.   Then

\be\label{boundedbelow'}
F(\rho)\ge C_1\int_{\mathbb{R}^3}\rho^{\gamma}d_3x-C_2, \qquad \rho\in W_M^{\ast},
\ee
for some positive constants constants $C_1$ and $C_2$ independent of $\rho$.\end{lem}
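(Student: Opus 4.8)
The plan is to control each of the three pieces of $F$ separately against the internal energy $\frac{1}{\gamma-1}\int_{\mathbb{R}^3}\rho^\gamma\,dx$, which is the only manifestly positive term. First I would record the signs. Integrating by parts in $\mathfrak{P}(\rho)=\psi$ via the defining relation $\mathrm{div}\big(\frac{1}{r^2}\nabla\psi\big)=-4\pi\beta\rho$ gives
\[
-\tfrac12\beta\int_{\mathbb{R}^3}\rho\,\mathfrak{P}(\rho)\,dx=-\frac{1}{8\pi}\int_{\mathbb{R}^3}\frac{|\nabla\psi|^2}{r^2}\,dx\le 0,
\]
so both the gravitational term $-\tfrac12\int\rho\,\mathfrak{G}(\rho)\,dx$ and the magnetic term are non-positive, and it suffices to bound each of their magnitudes above by $\varepsilon\int\rho^\gamma\,dx+C_\varepsilon$ for arbitrarily small $\varepsilon$; choosing $\varepsilon$ small then yields \eqref{boundedbelow'} with $C_1$ close to $1/(\gamma-1)$.

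For the gravitational term this is classical: by the Hardy--Littlewood--Sobolev inequality in $\mathbb{R}^3$,
\[
\int_{\mathbb{R}^3}\!\int_{\mathbb{R}^3}\frac{\rho(x)\rho(y)}{|x-y|}\,dx\,dy\le C\|\rho\|_{L^{6/5}(\mathbb{R}^3)}^2,
\]
and since $1<6/5<\gamma$ I would interpolate $\|\rho\|_{6/5}\le\|\rho\|_1^{1-\theta}\|\rho\|_\gamma^{\theta}$ with $\|\rho\|_1=M$ fixed. The resulting power $\|\rho\|_\gamma^{2\theta}$ has exponent $2\theta<\gamma$ whenever $\gamma>4/3$, so Young's inequality absorbs it into $\varepsilon\int\rho^\gamma\,dx+C_\varepsilon$.

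The magnetic term is the crux, and here I would use exactly the five-dimensional device announced in the introduction. Setting $\psi=r^2\chi$ converts the operator into $\chi_{rr}+\frac{3}{r}\chi_r+\chi_{zz}$, which is the radial form of $\Delta_5$; extending $\chi,\rho$ to axially symmetric functions $\chi_e,\rho_e$ on $\mathbb{R}^5$ yields $\Delta_5\chi_e=-4\pi\beta\rho_e$, so that $\chi_e$ is the Riesz potential of $\rho_e$ against the positive kernel $c_5|X-Y|^{-3}$. Converting the cylindrical integral to $\mathbb{R}^5$ (the volume elements differ by a factor $r^2$) expresses the magnetic energy as
\[
\tfrac12\beta\int_{\mathbb{R}^3}\rho\,\mathfrak{P}(\rho)\,dx=c\,\beta^2\int_{\mathbb{R}^5}\!\int_{\mathbb{R}^5}\frac{\rho_e(X)\rho_e(Y)}{|X-Y|^3}\,dX\,dY\ge 0,\qquad c>0,
\]
a Hardy--Littlewood--Sobolev integral in $\mathbb{R}^5$ with kernel exponent $3$, hence bounded by $C\|\rho_e\|_{L^{10/7}(\mathbb{R}^5)}^2$. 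This is precisely where the restricted class $W_M^{\ast}$ enters: since $\rho$ vanishes for $r\ge\mathfrak{R}$, the $\mathbb{R}^5$ weight obeys $r^3\le\mathfrak{R}^2 r$ on the support, giving $\|\rho_e\|_{L^{10/7}(\mathbb{R}^5)}\le C(\mathfrak{R})\|\rho\|_{L^{10/7}(\mathbb{R}^3)}$. Interpolating $\|\rho\|_{10/7}$ between $L^1$ and $L^\gamma$ and applying Young's inequality absorbs this term as well, the admissibility condition $2\theta<\gamma$ now reducing to $\gamma>8/5$; in particular it holds under the hypothesis $\gamma>2$.

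The main obstacle, and the only step beyond routine bookkeeping, is making the five-dimensional representation rigorous: carrying out the substitution $\psi=r^2\chi$ and the extension to $\rho_e,\chi_e$ so that $\Delta_5\chi_e=-4\pi\beta\rho_e$ holds distributionally, pinning down the sign and constant of the Green's function (so that the double integral is genuinely $\ge 0$, consistent with the integration by parts above), and checking that the boundary terms in that integration by parts vanish for $\rho\in W_M^{\ast}$. The weight conversion $r^3\le\mathfrak{R}^2 r$ also explains why $W_M^{\ast}$, and not $W_M$, is the natural class: without a bound on the $r$-support one cannot pass from an $\mathbb{R}^5$ norm of $\rho_e$ back to an $\mathbb{R}^3$ norm of $\rho$, and the whole estimate breaks down.
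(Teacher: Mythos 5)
Your proposal is correct, and it shares the paper's central device --- the substitution $\psi=r^{2}\chi$, the extension to axisymmetric functions on $\mathbb{R}^{5}$ with $\Delta_{5}\chi_{e}=-4\pi\beta\rho_{e}$, and the resulting convolution representation of the magnetic energy --- but it estimates the five-dimensional double integral by a genuinely different route. The paper does not use Hardy--Littlewood--Sobolev: it applies H\"older together with Young's convolution inequality to $\rho_{e}(G_{5}\ast\rho_{e})$, which requires the kernel $|x|^{-3}$ to lie in some $L^{q}(\mathbb{R}^{5})$; since it does not (integrability at infinity fails exactly where it holds at the origin), the paper inserts a radial cutoff $\delta$, supported where the distance of $x-y$ from the $z$-axis is at most $2\mathfrak{R}$ --- legitimate on $W_{M}^{\ast}$ --- to land on $|Q|\le C\|\rho_{e}\|_{s}^{2}$ with $s$ between $10/7$ and $2$, and then splits $\{\rho_{e}<1\}$, $\{\rho_{e}\ge 1\}$, uses $\int_{\mathbb{R}^{5}}\rho_{e}\,d_{5}x\le C\mathfrak{R}^{2}M$, and absorbs via the numeric Young inequality; the chain of requirements (in particular $\gamma\ge s$ with the splitting passing through $\rho_{e}^{2}$) is what produces the hypothesis $\gamma>2$. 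You instead apply the sharp HLS inequality in $\mathbb{R}^{5}$ with $\lambda=3$, $p=q=10/7$ (the exponents check: $\tfrac{7}{10}+\tfrac{7}{10}+\tfrac{3}{5}=2$), which tolerates $|x|^{-3}$ as a weak-$L^{5/3}$ kernel and needs no cutoff; the class $W_{M}^{\ast}$ then enters only through the weight comparison $R^{3}\le\mathfrak{R}^{2}R$ on the support, converting $\|\rho_{e}\|_{L^{10/7}(\mathbb{R}^{5})}$ into $\|\rho\|_{L^{10/7}(\mathbb{R}^{3})}$, and interpolation between $L^{1}$ and $L^{\gamma}$ plus Young absorbs the term whenever $2\theta/\gamma=\tfrac{3}{5(\gamma-1)}<1$, i.e.\ $\gamma>8/5$ --- your arithmetic here is right. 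This buys strictly more than the paper's lemma: you obtain the bound for all $\gamma>8/5$ on $W_{M}^{\ast}$, whereas the paper reaches $\gamma>8/5$ only in Appendix Theorem A2, via Riesz-potential estimates and in the strictly smaller class $W_{M}^{\ast\ast}$ of densities with support bounded in all of $\mathbb{R}^{3}$. Two small remarks: the integration-by-parts identity you open with (whose boundary terms you flag as the main obstacle) is dispensable for this lemma, since only $|Q|$ is needed, and in any case your own $\mathbb{R}^{5}$ representation with the explicit negative Green's function $G_{5}=-\tfrac{1}{15\omega_{5}}|x|^{-3}$ yields the sign $\beta\int\rho\psi\,d_{3}x>0$ with no boundary terms --- exactly how the paper derives it in Lemma 4.3; and the distributional validity of the extension is handled in the paper simply by observing that the profile functions $f(\cdot,x_{3})$, $g(\cdot,x_{3})$ satisfy the same radial equation whether the first argument is named $r$ or $R$, so no further verification is required there.
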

\begin{proof}
For $\rho\in W_M^{\ast}$, Let
\be\label{eq1} F(\rho)=\tilde F(\rho)+\int_{\mathbb{R}^3} \rho(x)\mathfrak{P}(\r)(x)d_3x. \ee
For  simplicity of  presentation, we set
\be\label{4pibeta} 4\pi\beta=-1.\ee
Let $\psi=\mathfrak{P}(\r)$; then $\psi$ satisfies the following equation
\be\label{eq3} \psi_{rr}-\frac{1}{r}\psi_r+\psi_{zz}= r^2\rho.\ee
It was shown in \cite{44} or \cite{45}, for $\rho\in W_M$ and $\gamma>4/3$, $\tilde F$ satisfies the inequality
\be\label{eq4} \tilde F(\rho)\ge c_1\int (\rho(x))^{\gamma}d_3x-c_2, \ee
for some positive constants $c_1$ and $c_2$ independent of $\rho$.
The main task is to estimate the term
\be\label{eq5} Q=\int\rho(x)\psi(x)d_3x.\ee
To this end, we make the change of variable
\be\label{eq6} \psi(r,\ z)=r^a\chi (r,\ z), \ee
where $a$ is a constant to be determined. We compute
$$\psi_{rr}-\frac{1}{r}\psi_r=r^a\chi_{rr}+(2a-1)r^{a-1}\chi_r+a(a-2)r^{a-2}\chi. $$
Taking $a=2$ gives
$$\psi_{rr}-\frac{1}{r}\psi_r=r^2\chi_{rr}+3r\chi_r,$$
so using \eqref{eq3} we get
\be\label{eq7}
\chi_{rr}+\frac{3}{r}\chi_r+\chi_{zz}=\rho.\ee
Noting that we are working with axi-symmetric functions, we recognize the left side of \eqref{eq7} to be related to the Laplacian of $\chi$ in 5-dimensions.
To make this precise, we must first extend our functions $\rho$ and $\chi$ from $\mathbb{R}^3$ to $\mathbb{R}^5$. We do this as follows: \\
Let $x_3=z$, $r=\sqrt{x_1^2+x_2^2}$; then \eqref{eq7} becomes
\be\label{eq8}
\chi_{rr}+\frac{3}{r}\chi_r+\chi_{x_3x_3}=\rho(r, x_3), \qquad \chi=\chi(r, x_3). \ee
Now write
$$\rho(x_1, x_2, x_3)=f( (x_1^2+x_2^2)^{1/2}, x_3)=f( r, x_3),$$
and define the extension of $\rho$ to $\mathbb{R}^5$ by
$$\rho_e(x_1, x_2, x_3, x_4, x_5)=f( (x_1^2+x_2^2+x_4^2+x_5^2)^{1/2}, x_3)=f(R, x_3), $$
where
\be\label{eq9}
R=(x_1^2+x_2^2+x_4^2+x_5^2)^{1/2}.\ee
Similarly, writing
$$\chi(x_1, x_2, x_3)=g((x_1^2+x_2^2)^{1/2},x_3)=g(r, x_3), $$
we extend $\chi$ to $\mathbb{R}^5$ by defining
$$\chi_e(x_1, x_2, x_3, x_4, x_5)=g( (x_1^2+x_2^2+x_4^2+x_5^2)^{1/2}, x_3)=g( R, x_3). $$
Since \eqref{eq7} can be written as
$$g_{rr}+\frac{3}{r}g_r+g_{x_3x_3}=f(r, x_3),$$
it follows that
\be\label{eq10}
\frac{\partial^2 g}{\partial R^2}+\frac{3}{R}\frac{\partial g}{\partial R}+\frac{\partial^2 g}{\partial x_3^2}=f(R, x_3)
\ee
because the functions $g(r, x_3)$ and $g(R, x_3)$ are the same functions with different names for the first variable and $f(r, x_3)$ is the same as $f(R, x_3)$, again with different names for the first variable. Thus, \eqref{eq7} gives
$$(\chi_e)_{RR}+\frac{3}{R}(\chi_e)_R+(\chi_e)_{x_3x_3}=\rho_e(R, x_3), $$
where $R$ is given in \eqref{eq9}. That is, the extended functions $\chi_e$ and $\rho_e$ satisfy
\be\label{eq11}
\Delta_5\chi_e=\rho_e, \ee
where $\Delta_5$ denotes the Laplacian in $\mathbb{R}^5$. Now it is well-known (\cite{gilbargtrudinger}) that the Green's function for $\Delta_5$ is
\be\label{eq12} G_5(x-y)=-\frac{1}{15\omega_5}|x-y|^{-3}, \ee
where $\omega_5$ is the volume of the unit 5-ball. 
Thus, from \eqref{eq11} we obtain
\be\label{eq13}
\chi_e(x)=\int_{\mathbb{R}^5}G_5(x-y)\rho_e(y)d_5y=(G_5\ast\rho_e)(x), \ee
where $\ast$ denotes the convolution operator.

\vskip 0.2cm
We shall use \eqref{eq13} to study $Q$; cf \eqref{eq5}.
Thus
\be\label{eq14}
Q=\int_{\mathbb{R}^3}\rho\psi d_3x=\int_{\mathbb{R}^3}\rho r^2\chi d_3x=K\int_{\mathbb{R}^5}\rho_e \chi_e d_5x=K\int_{\mathbb{R}^5}\rho_e(G_5\ast \rho_e) d_5x, \ee
where $K$ is the area of the unit 1-sphere divided by the area of the unit 3-sphere. Using H${\rm \ddot{o}}$lder's inequality we obtain
\be\label{eq15}
|\int_{\mathbb{R}^5}\rho_e(G_5\ast \rho_e) d_5x|\le \|\rho_e\|_s\|G_5\ast \rho_e\|_t,
\ee
where
\be\label{eq16}
\frac{1}{s}+\frac{1}{t}=1.
\ee
We would like to use Young's inequality (\cite{lieb}, P.19)
\be\label{eq17}
\|G_5\ast \rho_e\|_t=\tilde C \||x|^{-3}\ast \rho_e\|_t\le C\tilde C  \||x|^{-3}\|_q\|\rho_e\|_s,\ee
where $C=C(q, s, t)$, and
\be\label{eq18}
1+\frac{1}{t}=\frac{1}{q}+\frac{1}{s}.\ee
  To this end, we define the radial cut-off function $\delta$ by
\be\label{eq20} \delta(x-y)=\begin{cases} &1, \ {\rm if~} |rad(x-y)|\ge 2\mathfrak{R}, \\
                                           & 0, \ {\rm otherwise~}.
                                           \end{cases}\ee
Here $\mathfrak{R}$ is as in (3.14) and $|rad(x-y)|$ is the distance of $(x-y)$ from the $x_3$ (or $z$)-axis. We now note that for $\rho\in W_M^{\ast}$, we may replace $G_5$ by $\delta G_5$ in \eqref{eq14}, \eqref{eq15} and \eqref{eq17} and thus we need to study (from \eqref{eq17}),
\be\label{eq21} \|\delta(x)|x|^{-3}\|_q, \ee
where
\be\label{eq22} \|\delta(x)|x|^{-3}\|_q^q=\int_{\mathbb{R}^5}\delta(x)|x|^{-3q}d_5x. \ee
For this integral to be finite near $x=0$, we need
\be\label{eq23}q<5/3.\ee
From \eqref{eq16}, \eqref{eq18} and \eqref{eq13}, we obtain
\be\label{eq24} s>\frac{10}{7}. \ee
We will require $\gamma\ge s$. This is ensured for $\gamma>2$.
We next study the integral \eqref{eq22} at infinity.
\vskip 0.2cm
We decompose the 5-vector $x$ into its $z$ and $\bar r$ components:
$$x=a\bar {i_z}+\bar b, $$
where $\bar {i_z}$ is the unit vector in $z$-direction and $\bar b=rad(x)$. Writing $d\Omega$ for the angular element, we have
\begin{align*}\label{25}
&\|\frac{\delta(x)}{|x|^3}\|_q^q=\int_{\mathbb{R}^5}\frac{\delta(x)}{|x|^{3q}}d_5x
=\int_{-\infty}^{+\infty}da\int d\Omega\int \frac{r^3}{(r^2+a^2)^{3q/2}}dr\\
&\le \int_{-1}^{+1}\int_0^{2\mathfrak{R}}r^3dr\int d\Omega\frac{1}{(r^2+a^2)^{3q/2}}da
+2 \int_{1}^{+\infty}\int_0^{2\mathfrak{R}}r^3dr\int \frac{d\Omega}{a^{3q}}da.
\end{align*}
We will require
$$q\ge 1, $$
so both these expressions are finite . We now note from \eqref{eq14},
\be\label{eq26}
|Q|=K\int_{\mathbb{R}^5}\rho_e(G_5\ast\rho_e)d_5x\le CK\|\delta(x)|x|^{-3}\|_q\|\rho_e\|_s^2\le K C' \|\rho_e\|_s^2, \ee
where $C'=C\tilde C \|\delta(x)|x|^{-3}\|_q$ is a constant independent of $\rho_e$. This inequality, together with \eqref{eq1}, \eqref{eq4} and \eqref{eq14}, implies
\be\label{eq27}
F(\rho)\ge c_1\int_{\mathbb{R}^3} \rho^{\gamma}(x)d_3x-C-C'K\|\rho_e\|_s^2.\ee

We next estimate $\|\rho_e\|_s^2$. Before proceeding, we note that
\be\label{eq28} \int_{\mathbb{R}^5}\rho_ed_5x=C_1\int_{\mathbb{R}^3}r^2\rho d_3x\le C_1\mathfrak{R}^2M. \ee
Then
\begin{align*}
&\|\rho_e\|_s^s=\int_{\rho_e<1}\rho_e^sd_5x+\int_{\rho_e\ge 1}\rho_e^sd_5x\\
&\le \int_{\rho_e<1}\rho_e d_5x+\int_{\rho_e\ge 1}\rho_e^sd_5x.
\end{align*}
So
\be\label{29}
\|\rho_e\|_s^s\le C_1 M \mathfrak{R}^2+\int_{\rho_e\ge 1}\rho_e^2d_5x \le C_1 M \mathfrak{R}^2+\int_{\rho_e\ge 1}\rho_e^{\gamma}d_5x, \ee
and thus
\be\label{eq30}
|Q|\le C' \left(C_1 M \mathfrak{R}^2+\int_{\rho_e\ge 1}\rho_e^{\gamma}d_5x\right)^{2/s}.\ee
At this point, we need the following elmentary inequality
\be\label{31} (x+y)^a\le 2^a (x^a+y^a), \  {\rm for~}, x\ge 0,\ y\ge 0,\ a\ge 0.\ee
Using \eqref{31}, we get
\be\label{eq32}
\left(C_1 M \mathfrak{R}^2+\int_{\rho_e\ge 1}\rho_e^{\gamma}d_5x\right)^{2/s}\le (2C_1 M \mathfrak{R}^2)^{2/s}+(2\int_{\rho_e\ge 1}\rho_e^{\gamma}d_5x)^{2/s}.\ee
Applying the inequality (\cite{gilbargtrudinger})
\begin{equation} \label{eq430a}
\alpha\beta\le \epsilon\alpha^p+\epsilon^{-q/p}\beta^{q},\  \frac{1}{p}+\frac{1}{q}=1,\  p>1, q>1,
\end{equation}
gives for any $\epsilon>0$,
$$ (2\int_{\mathbb{R}^5}\rho_e^{\gamma}d_5x)^{2/s}\le \epsilon (\int_{\rho_e\ge 1}\rho_e^{\gamma}d_5x)^{{2p}/{s}}+\epsilon^{-q/p}2^{2q/s}.$$
Thus, assuming
\be\label{eq33}
s>2,
\ee
and choosing $p=\frac{s}{2}>1$, so $q=(1-\frac{1}{p})^{-1}=(1-\frac{2}{s})^{-1}>1, $
\eqref{eq32} implies
\begin{align*}
&\left(C_1M\mathfrak{R}^2+\int_{\rho_e\ge 1}\rho_e^{\gamma}d_5x\right)^{{2}/{s}}\\
&\le \left(2C_1M\mathfrak{R}^2\right)^{{2}/{s}}+\epsilon^{-q/p}2^{2q/s}+\epsilon\int_{\rho_e\ge 1}\rho_e^{\gamma}d_5x. \end{align*}
Then from \eqref{eq30}
$$|Q|\le C^{``}+C'\epsilon\int_{\rho_e\ge 1}\rho_e^{\gamma}d_5x,$$
where $$C^{``}=C'\left[\left(2C_1M\mathfrak{R}^2\right)^{\frac{2}{s}}+\epsilon^{-2/p}2^{2q/s}\right].$$
But
\begin{align*}
&C'\epsilon \int_{\rho_e\ge 1}\rho_e^{\gamma}d_5x\le  C'\epsilon \int_{\mathbb{R}^5}\rho_e^{\gamma}d_5x\\
&\le CC'\epsilon \int_{\mathbb{R}^3}r^2\rho^{\gamma}d_3x\le CC'\mathfrak{R}^2\epsilon \int_{\mathbb{R}^3}\rho^{\gamma}d_3x.\end{align*}
Using this in \eqref{eq27}, we obtain, by choosing $\epsilon$ sufficiently small,
\be\label{eq31}
F(\rho)\ge \frac{c_1}{2}\int_{\mathbb{R}^3}\rho^{\gamma}d_3x-C,
\ee
for some positive constants $c_1$ and $C$.
\end{proof}
\vskip 0.2cm
\begin{lem}\label{lem4.1} Suppose that $\gamma>4/3$, then
\be\label{negativeenergy}
\inf_{W_M^{\ast}} F(\rho)<0.\ee
\end{lem}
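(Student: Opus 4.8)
The plan is to reduce the magnetic problem to the already-understood non-magnetic one by showing that the magnetic contribution to $F$ carries a favorable sign, after which a single well-chosen test function finishes the argument. Recall from \eqref{f} that $F(\rho)=\tilde F(\rho)+I_2(\rho)$ with $I_2(\rho)=-\frac{1}{2}\beta\int_{\mathbb{R}^3}\rho\,\mathfrak{P}(\rho)\,dx$. First I would prove that $I_2(\rho)\le 0$ for every $\rho\in W_M^{\ast}$, so that $F(\rho)\le\tilde F(\rho)$; then it suffices to make $\tilde F$ negative on some admissible density.

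To establish the sign of $I_2$, write $\psi=\mathfrak{P}(\rho)$, so that \eqref{psi} gives $\operatorname{div}(\frac{1}{r^2}\nabla\psi)=-4\pi\beta\rho$ and hence $\rho=-\frac{1}{4\pi\beta}\operatorname{div}(\frac{1}{r^2}\nabla\psi)$. Substituting and integrating by parts,
$$\int_{\mathbb{R}^3}\rho\,\mathfrak{P}(\rho)\,dx=-\frac{1}{4\pi\beta}\int_{\mathbb{R}^3}\psi\,\operatorname{div}\Bigl(\frac{1}{r^2}\nabla\psi\Bigr)\,dx=\frac{1}{4\pi\beta}\int_{\mathbb{R}^3}\frac{|\nabla\psi|^2}{r^2}\,dx,$$
so that $I_2(\rho)=-\frac{1}{8\pi}\int_{\mathbb{R}^3}\frac{|\nabla\psi|^2}{r^2}\,dx\le 0$. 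Note that the free parameter $\beta$ cancels, so this sign is unconditional; in particular $F(\rho)\le\tilde F(\rho)$ for all $\rho\in W_M^{\ast}$.

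It remains to exhibit one density in $W_M^{\ast}$ on which $\tilde F$ is strictly negative, and here I would take the non-rotating non-magnetic star solution $\hat\rho$ of Theorem~\ref{5.1'}. Since $\hat\rho$ is radial with compact support of radius $R_M\le\mathfrak{R}$ and $r\le|x|$, it vanishes for $r\ge\mathfrak{R}$; being axisymmetric, nonnegative, of mass $M$, and in $L^1\cap L^\gamma$, it lies in $W_M^{\ast}$. Moreover $\tilde F(\hat\rho)=\inf_{X_M}\tilde F$, and the standard mass-preserving dilation $\rho_\lambda(x)=\lambda^3\rho(\lambda x)$ yields $\tilde F(\rho_\lambda)=\lambda^{3(\gamma-1)}\int A(\rho)\,dx-\frac{\lambda}{2}\iint\frac{\rho(x)\rho(y)}{|x-y|}\,dx\,dy$; because $\gamma>4/3$ forces $3(\gamma-1)>1$, the negative gravitational term dominates as $\lambda\to 0^{+}$, giving $\tilde F(\rho_\lambda)<0$ for small $\lambda$ and hence $\inf_{X_M}\tilde F<0$. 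Combining these facts, $\inf_{W_M^{\ast}}F\le F(\hat\rho)=\tilde F(\hat\rho)+I_2(\hat\rho)\le\tilde F(\hat\rho)<0$.

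The only genuinely delicate point is the integration by parts that produces $I_2\le 0$: one must justify that the boundary contributions at infinity vanish and that $\int_{\mathbb{R}^3}\frac{|\nabla\psi|^2}{r^2}\,dx$ is finite, which requires controlling the decay of $\psi=\mathfrak{P}(\rho)$ at spatial infinity (through the Green's-function bounds obtained via the $5$-Laplacian in Lemma~\ref{lem4.2}) together with the integrability of the weight $r^{-2}$ near the axis $r=0$. The negativity $\inf_{X_M}\tilde F<0$ is classical (cf.~\cite{AB,Rein}); I note that the dilation argument cannot be run directly inside $W_M^{\ast}$, since shrinking $\lambda$ enlarges the support past $r=\mathfrak{R}$, which is exactly why it is convenient to pass through $\hat\rho$ and the constraint-free class $X_M$.
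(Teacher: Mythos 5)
Your proof is correct, and it shares the paper's skeleton --- the splitting $F=\tilde F+I_2$ of \eqref{f} together with the compactly supported non-magnetic minimizer $\hat\rho$ as test density (your check that $R_M\le\mathfrak{R}$ and $r\le|x|$ place $\hat\rho$ in $W_M^{\ast}$ is exactly what the paper uses implicitly) --- but the mechanism by which you fix the sign of the magnetic term is genuinely different. The paper never integrates by parts: it sets $\psi=r^2\chi$, extends to $\mathbb{R}^5$ so that $\Delta_5\chi_e=-4\pi\beta\rho_e$, and reads the sign directly off the convolution representation, $\beta\int\rho\psi\,d_3x=-4\pi\beta^2\iint G_5(x-y)\rho_e(x)\rho_e(y)\,dx\,dy>0$ (see \eqref{10'}), which is immediate because the Green's function \eqref{eq12} is negative; no decay of $\psi$, boundary flux, or finiteness of a weighted Dirichlet integral ever enters, and the strict inequality $I_2(\hat\rho)<0$ comes for free. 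Your identity $I_2(\rho)=-\frac{1}{8\pi}\int r^{-2}|\nabla\psi|^2\,dx$ is attractive --- via \eqref{magpotential} it exhibits $I_2$ as exactly minus the magnetic field energy $\frac{1}{8\pi}\int |{\bf B}|^2\,d_3x$, and it makes the $\beta$-independence of the sign conceptually clear (the paper's $\beta^2$ plays the same role) --- but it shifts the entire burden onto justifying the integration by parts: vanishing of boundary terms at infinity and control of $r^{-2}|\nabla\psi|^2$ near the axis (the latter is fine since $\nabla\psi=O(r)$ there), and, as you yourself note, the natural way to supply those estimates is the same $5$-Laplacian convolution machinery, at which point the paper's direct sign-reading is the shorter route. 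Finally, where the paper cites \cite{44, 45} for $\tilde F(\hat\rho)<0$, you reprove it with the mass-preserving dilation in $X_M$, correctly observing that the dilation cannot be run inside $W_M^{\ast}$ because it enlarges the support past $r=\mathfrak{R}$; that makes your argument more self-contained. In sum: same architecture and same test function, but the paper buys an unconditional, justification-free sign from the explicit negative Green's function, while your energy identity buys physical transparency at the cost of the deferred (though fillable) analytic verification you flagged.
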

\begin{proof}
Let $\hat \rho$ be the compactly supported  solution for the non-rotating, non-magnetic star solution; cf (3.13). Then $\hat \rho\in W_M*$. Moreover, by the argument in \cite{44} or \cite{45}, we have
 \be \tilde F(\hat \rho)<0.\ee
 We use $\psi$ to denote $ -4\pi\beta\int_{\mathbb{R}^3} G(x, y) \rho(y)dy=\mathfrak{P}(\rho).$  Then
 $${\rm div} (\frac{1}{r^2}\nabla \psi)=-4\pi \beta\rho. $$
 Thus, for any $\rho\in W_M*$,  we use the notation in Lemma \ref{lem4.2}, i.e.,
\be\label{eq6'} \psi(r,\ z)=r^2\chi (r,\ z), \ee
so that
\be\label{eq7'}
\chi_{rr}+\frac{3}{r}\chi_r+\chi_{zz}=\rho.\ee
Thus, re-inserting $-4\pi \beta $, (see (4.3)),\eqref{eq7'} gives
$$(\chi_e)_{RR}+\frac{3}{R}(\chi_e)_R+(\chi_e)_{x_3x_3}=-4\pi \beta \rho_e(R, x_3), $$
or equivalently
\be\label{3'}
\Delta_5 \chi_e=-4\pi \beta \rho_e.\ee
Therefore,
\be\label{4'}
\chi_e=\int_{\mathbb{R}^5}G_5(x-y)(-4\pi \beta \rho_e)d_5x, \ee
where $G_5(x-y)$ is the Green's function of $\Delta_5$ given by (\ref{eq12}).
Notice that, for $\rho\in W_M*$,
\be\label{10'}
\beta\int_{\mathbb{R}^3} \rho \psi d_3x=\beta\int_{\mathbb{R}^5}\rho_e(x)\chi_e(x)d_5x= -4\pi \beta^2\int_{\mathbb{R}^5}\int_{\mathbb{R}^5}G_5(x-y)\rho_e(x)\rho_e(y)dxdy>0, \ee
due to \eqref{eq12}. This implies, for any for any $\rho\in W_M*$, $\beta\int \rho\psi(x)dx>0.$ In particular, 
 $I_2(\hat \rho)=:-\frac{1}{2}\beta\int_{\mathbb{R}^3}\hat \rho\mathfrak{P}(\hat \r) dx<0$. Since $F(\hat\rho)=\tilde F(\hat\rho)+I_2(\hat\rho)$,  we obtain $F(\hat \rho)<0$.  This proves \eqref{negativeenergy}.
 \end{proof}

 Lemmas \ref{lem4.2} and \ref{lem4.1} prove (1) in  Theorem \ref{maintheorem}. With this,  (2) and (3) in Theorem \ref{maintheorem} can be proved as in \cite{44} and \cite{45}.

%% Section 5: Appendix
\section{ Appendix}
\noindent{\bf Appendix A: Chandrasekhar Limit and the Case \boldmath{$\gamma=2$}}

\vskip 0.2cm  
In a recent paper \cite{das}, there has been a discussion of the " Chandrasekhar limit " for magnetic white dwarf stars. White dwarfs avoid gravitational collapse via " electron degeneracy pressure" (\cite {57}). This is a quantum mechanical effect resulting from the Pauli
Exclusion Principle; namely, since electrons are fermions, no two electrons can be in the same state, and therefore occupy a band of energy levels. Compression of the electrons increases the number of electrons in a given volume and raises the maximum energy level in the occupied band. Thus the energy of the electrons increases, resulting in a pressure against the gravitational compression of matter into smaller volumes of space. The  {\it Chandrasekhar limit } is the mass above which electron degeneracy pressure is insufficient to balance the stars own gravitational attraction.

In \cite{das}, the authors claim that " strongly magnetized white dwarfs not only can violate the Chandrasekhar mass limit significantly, but exhibit a different mass limit ". In their analysis they consider a polytropic equation of state $p=\rho^{\gamma}$ with $\gamma=2$. Thus it is of some interest to extend Theorem 3.3 to the case $\gamma=2$. \vspace{1em}

\noindent {\textbf{Theorem A1}}: Theorem 3.3 holds if $\gamma=2$ provided $|\beta|$ is sufficiently small.

\vskip 0.2cm

\noindent {\emph{Proof}}. It suffices to show that (4.1) holds if $\gamma=2$ for small $|\beta|$.

As before, we define $Q$ by
\be\label{eqq1} Q=-4\pi\beta \int_{\mathbb{R}^3} \rho \psi d_3x. \ee
For $s=2$, we have, as in (4.29),
$$\frac{|Q|}{4 \pi |\beta|}\le C'\left[C_1 M \tilde{R}^2+\int_{\rho_e\ge 1}\rho_e^2d_5x\right), $$
so
\be\label{2}\frac{|Q|}{4 \pi |\beta|}\le C' \left[C_1 M \tilde{R}^2+C\int_{\mathbb{R}^3}\rho^2d_3x\right], \ee
since
$$\int_{\rho_e\ge 1} \rho_e^2d_5x\le \int_{\mathbb{R}^5} \rho_e^2d_5x\le C \int_{\mathbb{R}^3} r^2 \rho^2d_3x \le C\mathfrak{R}^2 \int_{\mathbb{R}^3}  \rho^2d_3x.$$
Thus, 
\be\label{eqq3} |Q|\le 4\pi|\beta|(C'C_1M\mathfrak{R}^2)+4\pi|\beta|C\mathfrak{R}^2)\int_{\mathbb{R}^3}  \rho^2d_3x.\ee
But 
$$F(\rho)=\tilde F(\rho)-4\pi \beta \int_{\mathbb{R}^3}\rho(x)\psi(x)dx$$
where $$
\tilde F(\rho)\ge c_1\int_{\mathbb{R}^3}\rho^2d_3x-c_2.$$
Now choose $|\beta|$ so small that
$$4\pi |\beta|C\mathfrak{R}^2<\frac{c_1}{2}.$$
Then
$$F(\rho)\ge c_1\int_{\mathbb{R}^3}\rho^2d_3x-c_2-4\pi|\beta|(CC'M\mathfrak{R}^2)-\frac{ c_1}{2}\int_{\mathbb{R}^3}\rho^2d_3x,
$$
which implies: 
$$F(\rho)\ge \frac{c_1}{2}\int_{\mathbb{R}^3}\rho^2d_3x-c_2-4\pi|\beta|(CC'M\mathfrak{R}^2),
$$
and this is (4.1). $\Box$

 \vspace{1em}

The last result was valid for $\rho\in W_M^{\ast}:=\{\rho\in W_M: \rho(r, z)=0 \ {\rm for~} r \ge \mathfrak{R}\}.$
If we consider $\rho$ in a smaller class; namely,
\be\label{key'}
 W_M^{\ast\ast}:=\{\rho\in W_M: \rho(r, z)=0 \ {\rm for~} \sqrt{r^2+z^2} \ge \mathfrak{R}\},
\ee
we can reduce $\gamma$ below 2, with no restriction on $\beta$.

\noindent {\textbf{Theorem A2}}: If $\rho \in  W_M^{\ast\ast}$, then Theorem 3.3 holds for $\gamma>8/5.$
\vskip 0.2cm 
\noindent {\it Proof.} As in (\ref{eq14}), we have
$$Q=\int_{\mathbb{R}^3}\rho\psi d_3x=K\int_{\mathbb{R}^5}\rho_e\chi_e d_5x,$$
with
$$\chi_e(x)=-\frac{1}{15\omega_5}\int_{\mathbb{R}^5}\frac{\rho_e(y)}{|x-y|^{3}}d_{5}y
=-\frac{1}{15\omega_5}\int_{\Omega_5}\frac{\rho_e(y)}{|x-y|^{3}}d_{5}y,$$
where
\be\label{omega5}\Omega_5=:\{x\in \mathbb{R}^5: R=\sqrt{x_1^2+x_2^2+x_4^2+x_5^2}\le \mathfrak{R}, |x_3|\le \mathfrak{R}.\}\ee
By H$\ddot{\rm o}$lder's inequality, we have
\be\label{eeq1} |Q|\le K\|\rho_e\|_{2-\epsilon}\|\chi_e\|_{(2-\epsilon)/(1-\epsilon)}.\ee
By the Reisz potential estimate (cf.~\cite{gilbargtrudinger} Lemma 7.12, p.~159), we obtain
\be\label{eeq2}\|\chi_e\|_{(2-\epsilon)/(1-\epsilon)}\le C_p |\Omega_5|^{\mu-\delta}\|\rho_e\|_p, \ p>\frac{5(2-\epsilon)}{9-7\epsilon},\ee
for $\mu=\frac{2}{5}$, $\delta=\frac{1}{p}-\frac{1-\epsilon}{2-\epsilon}$.
Now H$\ddot{\rm o}$lder's inequality states, if $f\in L^q\cap L^r$ ($1\le q<p<r<\infty$), then
$$\|f\|_p\le \|f\|_q^a\|f\|_r^{1-a}, $$
for
$$a=\frac{p^{-1}-r^{-1}}{q^{-1}-r^{-1}}.$$
Taking $q=1$, $r=2-\epsilon$,
\be\label{eeqa}a=\frac{(2-\epsilon)/p-1}{1-\epsilon}<1,\ee
and using \eqref{eeq1}, \eqref{eeq2}, we obtain
\be\label{eeq3}
|Q|\le C |\Omega_5|^{\mu-\delta}\|\rho_e\|_{2-\epsilon}\|\rho_e\|_1^a\|\rho_e\|_{2-\epsilon}^{1-a}
= C |\Omega_5|^{\mu-\delta}\|\rho_e\|^{2-a}_{2-\epsilon}\|\rho_e\|_1^a, \ee
i.e.,
\be\label{eeq4}
|Q|\le C |\Omega_5|^{\mu-\delta}\left(\int_{\Omega_5}\rho_e d_5x\right)^a\left(\int_{\Omega_5}\rho_e^{2-\epsilon} d_5x\right)^{(2-a)/(2-\epsilon)}. \ee
Suppose
\be\label{eeq5}
\gamma>2-\epsilon. \ee
Then writing (\ref{eq430a}) in the form
\[
	\alpha\beta \leq \lambda\alpha^{p} + \lambda^{-q/p}\beta^{q}, \ \frac{1}{p} + \frac{1}{q} = 1, \ p > 1, \, q > 1
\]
with 
\[
	\alpha = \rho_{\epsilon}^{2 - \epsilon}, \ \beta = 1, \ p = \frac{2}{2 - \epsilon}, \ q = \frac{\gamma}{\gamma - (2 - \epsilon)},
\]
we obtain
\be\label{eeq6}\int_{\Omega_5}\rho_e^{2-\epsilon} d_5x\le \lambda \int_{\Omega_5}\rho_e^{\gamma}d_5x +\lambda^{-(2-\epsilon)/(\gamma-(2-\epsilon)}|\Omega_5|,\ee
for any positive constant $\lambda$.
Therefore, it follows from \eqref{eeq4} and \eqref{eeq5} that
\be\label{eeq7}|Q|\le C|\Omega_5|^{\mu-\delta}\left(\int_{\Omega_5}\rho_e d_5x\right)^a\left(\lambda \int_{\Omega_5}\rho_e^{\gamma}d_5x +\lambda^{-(2-\epsilon)/(\gamma-(2-\epsilon)}|\Omega_5|\right)^{(2-a)/(2-\epsilon)}.\ee
 We choose $a=\epsilon$, then by \eqref{eeqa}, we obtain
 \be\label{eeq8}
 p=\frac{2-\epsilon}{1+\epsilon-\epsilon^2}.\ee
 Since it is required that $p>\frac{5(2-\epsilon)}{9-7\epsilon}$ (see \eqref{eeq2}), for $p$ given by \eqref{eeq8}, this is equivalent to requiring
 \be\label{eeq9}\epsilon<\frac{2}{5}.\ee
 Moreover, we require $\gamma>2-\epsilon,$ (see \eqref{eeq5}). So if $\gamma>8/5$, \eqref{eeq9}
 is ensured. \\
 For $a=\epsilon$, we get from \eqref{eeq7},
 \be\label{eeq10}
 |Q|\le C(M, \mathfrak{R})\left(\lambda\int_{\mathbb{R}^3}\rho_{e}^{\gamma}d_3x
 +\lambda^{-(2-\epsilon)/(\gamma-(2-\epsilon)}\right),
 \ee
 for some constant $C(M, \mathfrak{R})$ depending on $M$ and $\mathfrak{R}$, by noting that \bigskip
\[
	\int_{\Omega_5}\rho_e d_5x=A\int_{-\mathfrak{R}}^{\mathfrak{R}}\int_0^{\mathfrak{R}}R^3\rho_e(R, z)dRdz\le A\mathfrak{R}^2\int_{\mathbb{R}^3}\rho d_3x=A\mathfrak{R}^2M,
 \]
 \[
 	\int_{\Omega_5}\rho_e^{\gamma} d_5x\le A\mathfrak{R}^2\int_{\mathbb{R}^3}\rho^{\gamma}d_3x, \bigskip
\] 
where $A$ is a universal constant. 
 By choosing $\lambda$ sufficiently small, we get \medskip
\[
 	F(\rho)\ge \frac{1}{2}\int_{\mathbb{R}^3}\frac{\rho^{\gamma}}{\gamma-1}d_3x-C(M, \mathfrak{R}), \medskip
\]
for $\rho\in W_M^{\ast\ast}$. This finishes the proof, using the same argument as in the proof of Theorem 3.1. $\Box$

\vskip 0.3cm

\noindent{\bf Appendix B: Non-existence of Spherically Symmetric Magnetic Stars.}

\vskip 0.2cm

Radial magnetic stars cannot exist because there are no magnetic point charges. One can see this as a consequence of $\nabla\cdot {\bf B}=0$. Namely, if ${\bf B}$ is spherically symmetric, then 
$${\bf B}=B(r)(\frac{x_1}{r},\, \frac{x_2}{r}, \, \frac{x_3}{r}), \  r=(x_1^2+x_2^2+x_3^2)^{1/2}.$$
Thus 
$$\nabla\cdot {\bf B}=\sum_{i=1}^3\partial_{x_i}(B(r)\frac{x_i}{r}), $$
and since 
 $\partial_{x_i}r=\frac{x_i}{r}$, $i=1, 2, 3$, we have
$$\partial_{x_i} (B(r)\frac{x_i}{r})=\partial_r B(r) (\frac{x_i}{r})^2+B(r)(\frac{1}{r}-\frac{x_i^2}{r^3}).$$
It follows that
$$0=\nabla\cdot {\bf B}=\partial_r B(r)+\frac{2}{r}B(r),$$
so $\partial_r (r^2 B(r))=0 $ and thus $$ B(r)=\frac{c}{r^2}, \ c=  {\rm ~const}.$$
If  $B$ is bounded as $r\to 0+$, then $B(r)=0$. \\

If we allow the singularity at $r=0$ (i.e., $c\ne 0$), then if $B_{R}$ is the $R$-ball in $\mathbb{R}^{3}$,  the magnetic energy is
$$\frac{1}{8\pi}\int_{B_R} |{\bf B}|^2d_3 x=\frac{1}{8\pi}\int_0^R\frac{c^2}{r^4}4\pi r^2dr=\frac{c^2}{2}\int_0^R\frac{dr}{r^2}=\infty. $$

\vskip 0.3cm

\vspace{4.5em}
\noindent \underline{Acknowledgement}: JS would like to thank P.~Smereka, S.~Wu, and H.~T.~Yau for helpful conversations.

\vfill
\noindent Paul Federbush, \\
Department of Mathematics, University of Michigan, Ann Arbor\\
pfed@umich.edu \bigskip \\
Tao Luo, \\
Department of Math. \& Stat. Georgetown University, \\
tl48@georgetown.edu \bigskip \\
Joel Smoller \\
Department of Mathematics, University of Michigan, Ann Arbor\\
smoller@umich.edu

 \end{document}